\newtheorem{theorem}{Theorem}
\newtheorem{proposition}[theorem]{Proposition}
\newtheorem{lemma}[theorem]{Lemma}
\newtheorem{example}[theorem]{Example}
\newtheorem{definition}[theorem]{Definition}
\newtheorem{remark}[theorem]{Remark}
\newcommand{\R}{\mathbb{R}}
\newcommand{\T}{\mathbb{T}}
\newcommand{\g}{\mathcal{L}}
\newcommand{\Epi}{\mbox{Epi}}
\newcommand{\Ga}{\gamma_1}
\newcommand{\Gb}{\gamma_2}
\begin{document}

\title{A unified approach to the calculus of variations on time scales}

\author{Ewa Girejko\aref{av1,bial},
Agnieszka B. Malinowska\aref{av2,bial},
Delfim F. M. Torres\aref{av3}}

\affiliation[av1]{Department of Mathematics, University of Aveiro,
    3810-193 Aveiro, Portugal
    \email{egirejko@ua.pt}}
\affiliation[av2]{Department of Mathematics, University of Aveiro,
    3810-193 Aveiro, Portugal
    \email{abmalinowska@ua.pt}}
\affiliation[av3]{Department of Mathematics, University of Aveiro,
    3810-193 Aveiro, Portugal
    \email{delfim@ua.pt}}
\affiliation[bial]{Faculty of Computer Science,
    Bia{\l}ystok University of Technology, 15-351 Bia\l ystok, Poland}

\maketitle


\begin{abstract}
In this work we propose a new and more general approach to the calculus of variations
on time scales that allows to obtain, as particular cases, both delta and nabla results.
More precisely, we pose the problem of minimizing or maximizing
the composition of delta and nabla integrals with Lagrangians
that involve directional derivatives.
Unified Euler-Lagrange necessary optimality conditions,
as well as sufficient conditions under appropriate
convexity assumptions, are proved. We illustrate
presented results with simple examples.\\

\noindent \textbf{About this Article:} This is a preprint of a paper
whose final form appeared in the CD-ROM Conference Proceedings of 2010 CCDC,
published by IEEE Industrial Electronics (IE) Chapter, Singapore:\\[0.1cm]
E. Girejko, A. B. Malinowska, D. F. M. Torres.
A unified approach to the calculus of variations on time scales,
Proceedings of 2010 CCDC, Xuzhou, China, May 26-28, 2010.
In: IEEE Catalog Number CFP1051D-CDR, 2010, 595--600.\\

\noindent \textbf{2010 Mathematics Subject Classification:} 49K05, 26E70, 34N05.
\end{abstract}


\keywords{Euler-Lagrange equations, calculus of variations,
delta and nabla calculi, time scales, directional derivatives.}

\footnotetext{This work was partially supported by the R\&D unit CIDMA,
via FCT and the EC fund FEDER/POCI 2010.
Girejko was also supported by the post-doc
fellowship SFRH/BPD/48439/2008; Malinowska by BUT,
via a project of the Polish Ministry of Science and
Higher Education ``Wsparcie miedzynarodowej mobilnosci naukowcow'';
Torres by the project UTAustin/MAT/0057/2008.}


\section{INTRODUCTION}

The theory of time scales was initiated by Aulbach and Hilger
in order to create a calculus that can unify and extend discrete
and continuous analysis \cite{A:H}.
It has found applications in several different fields that require simultaneous modeling
of discrete and continuous data, in particular in the calculus of variations.
There are two approaches that are followed in the literature
of the calculus of variations on time scales: one is concerned with the minimization of delta integrals
with a Lagrangian depending on delta derivatives \cite{A:M:02,B:T:08,B:04,F:T:08,AM:T};
the other with minimization of nabla integrals with integrands that involve nabla derivatives
\cite{A:B:L:06,A:U:08}. Both formulations of the problems of the calculus of variations
give results that are similar among them and similar to the classical results
of the calculus of variations (see, \textrm{e.g.}, \cite{Brunt})
but are obtained independently. The main goal of the present paper
is to give a unified treatment to the subject.
Motivated by this aim we propose the problem of the calculus of variations
on time scales that involves functionals with delta and nabla derivatives, \textrm{e.g.},
\begin{multline}
\label{problem:intro}
\text{extremize }\ \ \g(y)
= \Ga\int_a^b L_{\Delta}\left(t,y^\sigma(t),y^\Delta(t)\right) \Delta t\\
+ \Gb\int_a^b L_{\nabla}\left(t,y^\rho(t),y^\nabla(t)\right) \nabla t
\end{multline}
subject to the boundary conditions
$y(a) = \alpha$ and $y(b) = \beta$,
where $\alpha$ and $\beta$ are given real numbers. In the particular cases when $\Ga=0$ or $\Gb=0$
functional~\eqref{problem:intro} reduces to $\g(y) = \Gb\int_a^b L_{\nabla}\left(t,y^\rho(t),y^\nabla(t)\right) \nabla t$
or $\g(y) =\Ga\int_a^b L_{\Delta}\left(t,y^\sigma(t),y^\Delta(t)\right) \Delta t$.
More generally than this, we propose to unify delta and nabla calculus by using directional derivatives,
namely the derivative $D\overline{f}(t)(u)$ from the right of $\overline{f}$
at $t$ in the direction $u$, where $\overline{f}$ is the function defined
on a real interval and associated with $f$ (which is defined on a time scale), by the formula
\[
\overline{f}(t)=
\begin{cases}
f(t), &\text{if $t\in \T$}\, ,\\
f(s)+\frac{f(\sigma(s))-f(s)}{\mu(s)}(t-s), &\text{if $t\in(s,\sigma(s))$}\, ,
\end{cases}
\]
where $s\in \T$ is right-scattered.
With the use of the directional derivative we are able to prove unified Euler-Lagrange equations
and to give a unified treatment to the calculus of variations on time scales,
obtaining both delta and nabla results as trivial corollaries
and extending the calculus of variations
to a wider class of functions defined on time scales.
The paper is organized as follows.
Preliminary definitions and notations are gathered in Section~\ref{sec:prelim}.
The main results on the unification of problems of calculus of variations
are given in Section~\ref{E-L_equation}.
Finally, Section~\ref{sec:conc} presents some conclusions
and open questions.


\section{PRELIMINARIES}
\label{sec:prelim}

In this section we review necessary results from the literature. We assume
the reader to be familiar with the basic definitions and facts
concerning the delta and nabla differential calculus on time scales.
For an introduction to the subject we refer the reader to the books
\cite{B:P:01,B:P:03,Lak:book}.

Throughout the whole paper we assume $\T$ to be a given time scale with
$\inf\T:=a$, $\sup\T:=b$, and $I:=[a,b]\cap\T$ for $[a,b]\subset\R$.
Moreover, by $I^{\kappa}_{\kappa}$
(or $\T^{\kappa}_{\kappa}$) we mean
$I^{\kappa}_{\kappa}:=I^{\kappa}\cap I_{\kappa}$ (or, respectively,
$\T^{\kappa}_{\kappa}:=\T^{\kappa}\cap \T_{\kappa}$),
with $I^\kappa = I\setminus (\rho(b),b]$
and $I_\kappa = I\setminus [a,\sigma(a))$.
We recall that if $y$ is delta differentiable at $t\in\T$, then $y^\sigma(t) = y(t) +
\mu(t) y^\Delta(t)$; if $y$ is nabla differentiable at $t$, then
$y^\rho(t) = y(t) - \nu(t) y^\nabla(t)$.

If the functions $f,g : \mathbb{T}\rightarrow\mathbb{R}$ are delta
and nabla differentiable with continuous derivatives, then the
following formulas of integration by parts hold:
\begin{equation}
\label{intBP}
\begin{split}
\int_{a}^{b}f^\sigma(t) g^{\Delta}(t)\Delta t
&=\left.(fg)(t)\right|_{t=a}^{t=b}
-\int_{a}^{b}f^{\Delta}(t)g(t)\Delta t \, , \\
\int_{a}^{b}f(t)g^{\Delta}(t)\Delta t
&=\left.(fg)(t)\right|_{t=a}^{t=b}
-\int_{a}^{b}f^{\Delta}(t)g^\sigma(t)\Delta t \, , \\
\int_{a}^{b}f^\rho(t)g^{\nabla}(t)\nabla t
&=\left.(fg)(t)\right|_{t=a}^{t=b}
-\int_{a}^{b}f^{\nabla}(t)g(t)\nabla t \, ,\\
\int_{a}^{b}f(t)g^{\nabla}(t)\nabla t
&=\left.(fg)(t)\right|_{t=a}^{t=b}
-\int_{a}^{b}f^{\nabla}(t)g^\rho(t)\nabla t \, .
\end{split}
\end{equation}

The following fundamental lemma of the calculus of variations on
time scales involving a nabla derivative and a nabla integral has
been proved in \cite{NM:T}.

\begin{lemma}{\rm (The nabla Dubois-Reymond lemma \cite[Lemma~14]{NM:T}).}
\label{DBRL:n} Let $f \in C_{\textrm{ld}}(I, \mathbb{R})$. If
$$
\int_{a}^{b} f(t)\eta^{\nabla}(t)\nabla t=0
$$
for all $\eta \in C_{\textrm{ld}}^1(I, \mathbb{R})$
such that $\eta(a)=\eta(b)=0$, then $f(t) \equiv c$
for all $t\in I_\kappa$, where $c$ is a constant.
\end{lemma}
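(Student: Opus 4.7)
The plan is to follow the classical du Bois-Reymond argument, adapted to the nabla calculus on time scales. The idea is to manufacture a particular test function $\eta$ from $f$ itself, chosen so that plugging it into the hypothesis forces an expression of the form $\int_a^b (f-c)^2 \nabla t = 0$.

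First I would select the constant $c$ by the averaging condition
\[
c := \frac{1}{b-a}\int_a^b f(t)\,\nabla t,
\]
so that $\int_a^b (f(t)-c)\,\nabla t = 0$. Next I would define
\[
\eta(t) := \int_a^t \bigl(f(\tau)-c\bigr)\,\nabla \tau, \qquad t\in I.
\]
Since $f$ is ld-continuous, $\eta$ is well defined, nabla differentiable with $\eta^\nabla(t) = f(t)-c$ for $t\in I_\kappa$, and thus $\eta\in C^1_{\textrm{ld}}(I,\mathbb{R})$. By construction $\eta(a)=0$, and the choice of $c$ yields $\eta(b)=0$, so $\eta$ is an admissible test function.

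Plugging this $\eta$ into the hypothesis gives $\int_a^b f(t)\bigl(f(t)-c\bigr)\nabla t = 0$. On the other hand, $\int_a^b c\,\eta^\nabla(t)\,\nabla t = c\bigl(\eta(b)-\eta(a)\bigr)=0$, so subtracting I obtain
\[
\int_a^b \bigl(f(t)-c\bigr)^2 \,\nabla t = 0.
\]
The integrand is non-negative and ld-continuous, so the standard time-scales argument yields $f(t)=c$ for every $t\in I_\kappa$: at left-scattered points the contribution is $\nu(t)(f(t)-c)^2$, which forces $f(t)=c$ there, and at left-dense points ld-continuity together with a neighbourhood argument excludes $(f-c)^2>0$.

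The main obstacle I expect is the last step, namely verifying rigorously that vanishing of the nabla integral of a non-negative ld-continuous function implies pointwise vanishing on all of $I_\kappa$ (and not merely almost everywhere in the measure-theoretic sense). This requires a careful case split between left-scattered and left-dense points, using the definition of the nabla integral and left-continuity of $f$. Everything else is routine: the only place where the particular nabla structure plays a role is in checking that $\eta$ built as a nabla antiderivative of an ld-continuous function indeed lies in $C^1_{\textrm{ld}}$ with the expected derivative.
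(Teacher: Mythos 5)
The paper offers no proof of this lemma at all---it is imported verbatim from \cite[Lemma~14]{NM:T}---so there is no in-paper argument to compare against; your proof is the standard du Bois--Reymond construction, which is exactly how the cited source establishes the result, and it is correct. The key steps all check out: by the nabla fundamental theorem of calculus, $\eta(t)=\int_a^t\bigl(f(\tau)-c\bigr)\nabla\tau$ lies in $C^1_{\textrm{ld}}(I,\R)$ with $\eta^\nabla=f-c$ on $I_\kappa$ and vanishes at both endpoints because of the averaged choice of $c$; the hypothesis together with $\int_a^b c\,\eta^\nabla(t)\,\nabla t=c\bigl(\eta(b)-\eta(a)\bigr)=0$ then yields $\int_a^b\bigl(f(t)-c\bigr)^2\nabla t=0$, and the positivity step is settled by precisely the case split you indicate (a left-scattered $t$ contributes $\nu(t)\bigl(f(t)-c\bigr)^2\ge 0$, forcing $f(t)=c$ there; at a left-dense $t$, left-continuity of $f$ gives $\int_s^t\bigl(f(\tau)-c\bigr)^2\nabla\tau\ge \tfrac{\varepsilon}{2}(t-s)>0$ for $s$ close to $t$, a contradiction). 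One pedantic caveat, inherited from the statement rather than from your argument: if $a$ is right-dense then $a\in I_\kappa$, yet the nabla integral is blind to the single value $f(a)$, so the construction literally gives $f\equiv c$ on $(a,b]\cap\T$ only; this boundary value is immaterial for the way the lemma is invoked in the proof of Theorem~\ref{thm:mr1}.
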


Lemma~\ref{DBRL:d} is the analogous delta version of
Lemma~\ref{DBRL:n}.

\begin{lemma}{\rm (The delta Dubois-Reymond lemma \cite[Lemma 4.1]{B:04}).}
\label{DBRL:d} Let $g\in C_{\textrm{rd}}(I, \mathbb{R})$. If
$$
\int_{a}^{b}g(t) \eta^\Delta(t)\Delta t=0
$$
for all $\eta\in C_{\textrm{rd}}^1(I, \mathbb{R})$ such that
$\eta(a)=\eta(b)=0$, then $g(t)\equiv c$ on $I^\kappa$
for some $c\in\mathbb \R$.
\end{lemma}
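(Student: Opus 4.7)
The plan is to adapt the classical du Bois–Reymond trick to the delta setting: choose the constant $c$ so that a primitive of $g-c$ is an admissible variation, and then use the hypothesis to conclude that the $L^2$-norm of $g-c$ vanishes. Concretely, I would first set
\[
c := \frac{1}{b-a}\int_{a}^{b} g(t)\,\Delta t ,
\]
and then define the test function
\[
\eta(t) := \int_{a}^{t}\bigl(g(\tau)-c\bigr)\,\Delta\tau , \qquad t\in I .
\]
Since $g-c$ is rd-continuous, $\eta$ is delta differentiable on $I^\kappa$ with $\eta^{\Delta}(t)=g(t)-c$, which is itself rd-continuous, so $\eta\in C^{1}_{\text{rd}}(I,\R)$. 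The choice of $c$ gives $\eta(a)=0$ and $\eta(b)=\int_a^b(g-c)\,\Delta t=0$, so $\eta$ is admissible.

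Next I would plug this $\eta$ into the hypothesis. From $\int_a^b g(t)\eta^{\Delta}(t)\,\Delta t = 0$ and the trivial identity $\int_a^b c\,\eta^{\Delta}(t)\,\Delta t = c\bigl(\eta(b)-\eta(a)\bigr)=0$, subtracting yields
\[
\int_{a}^{b}\bigl(g(t)-c\bigr)\eta^{\Delta}(t)\,\Delta t
= \int_{a}^{b}\bigl(g(t)-c\bigr)^{2}\,\Delta t = 0 .
\]

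The remaining step, which I expect to be the real point of the argument, is to deduce from this that $g(t)=c$ on all of $I^{\kappa}$. The integrand $(g-c)^{2}$ is nonnegative and rd-continuous. If $t_{0}\in I^\kappa$ is right-scattered, then the contribution of the slice $[t_{0},\sigma(t_{0}))$ to the delta integral equals $\mu(t_{0})\bigl(g(t_{0})-c\bigr)^{2}$, which forces $g(t_{0})=c$. If $t_{0}\in I^\kappa$ is right-dense and $g(t_{0})\neq c$, rd-continuity provides a right neighborhood on which $(g-c)^{2}$ is bounded below by a positive constant, giving a strictly positive contribution to the integral, a contradiction. Hence $g\equiv c$ on $I^{\kappa}$, which is the desired conclusion. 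The main subtlety is just being careful that the argument only delivers information on $I^{\kappa}$ and not at the final left-scattered point, which is consistent with the fact that the value of $g$ at such a point never enters the delta integral.
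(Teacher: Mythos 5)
Your proposal is correct. The paper itself gives no proof of this lemma---it is quoted from Bohner's work \cite[Lemma 4.1]{B:04}---and your argument (taking $c$ as the mean value of $g$, using the primitive $\eta(t)=\int_a^t(g(\tau)-c)\,\Delta\tau$ as admissible variation, and then forcing $\int_a^b(g-c)^2\,\Delta t=0$ to vanish pointwise on $I^\kappa$ via the right-scattered/right-dense case split) is exactly the standard delta du Bois--Reymond proof used in that reference, so no further comparison is needed.
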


Proposition~\ref{prop:rel:der} gives a relationship between delta
and nabla derivatives.

\begin{proposition}{\rm (\cite[Theorems~2.5 and 2.6]{A:G:02}).}
\label{prop:rel:der} (i) If $f : \mathbb{T} \rightarrow \mathbb{R}$
is delta differentiable on $\mathbb{T}^\kappa$ and $f^\Delta$ is
continuous on $\mathbb{T}^\kappa$, then $f$ is nabla differentiable
on $\mathbb{T}_\kappa$ and
\begin{equation}
\label{eq:chgN_to_D} f^\nabla(t)=\left(f^\Delta\right)^\rho(t) \quad
\text{for all } t \in \mathbb{T}_\kappa \, .
\end{equation}
(ii) If $f : \mathbb{T} \rightarrow \mathbb{R}$ is nabla
differentiable on $\mathbb{T}_\kappa$ and $f^\nabla$ is continuous
on $\mathbb{T}_\kappa$, then $f$ is delta differentiable on
$\mathbb{T}^\kappa$ and
\begin{equation}
\label{eq:chgD_to_N} f^\Delta(t)=\left(f^\nabla\right)^\sigma(t)
\quad \text{for all } t \in \mathbb{T}^\kappa \, .
\end{equation}
\end{proposition}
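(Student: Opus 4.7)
The plan is to prove (i) by a case analysis on the nature of $t\in\T_\kappa$, exploiting the elementary equality $\sigma(\rho(t))=t$ when $t$ is left-scattered and the continuity hypothesis on $f^\Delta$ when $t$ is left-dense. Part (ii) then follows from a completely symmetric argument, with the roles of $\sigma$ and $\rho$, and of $\Delta$ and $\nabla$, interchanged; so I would concentrate on (i) and fix $t\in\T_\kappa$.

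First I would dispose of the easy subcase $\rho(t)<t$. Since $\rho(t)\in\T$ and no points of $\T$ lie strictly between $\rho(t)$ and $t$, one has $\sigma(\rho(t))=t$, hence $\mu(\rho(t))=t-\rho(t)>0$ and $\rho(t)\in\T^\kappa$. Delta differentiability of $f$ at $\rho(t)$ then gives $f^\Delta(\rho(t))=\frac{f(\sigma(\rho(t)))-f(\rho(t))}{\mu(\rho(t))}=\frac{f(t)-f(\rho(t))}{t-\rho(t)}$, and the right-hand side is precisely the nabla difference quotient at the left-scattered point $t$. Thus $f^\nabla(t)$ exists and equals $f^\Delta(\rho(t))=(f^\Delta)^\rho(t)$; note that continuity of $f^\Delta$ plays no role here.

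The main obstacle is the left-dense subcase $\rho(t)=t$, where the nabla derivative must be produced as a genuine limit rather than as a single difference quotient. Here I would invoke continuity of $f^\Delta$ on $\T^\kappa$. Given $\varepsilon>0$, continuity at $t$ supplies a neighborhood $U$ of $t$ in $\T$ on which $|f^\Delta(\tau)-f^\Delta(t)|<\varepsilon$. Using that $f$ is an antiderivative of its continuous delta derivative, I would write $f(t)-f(s)=\int_s^t f^\Delta(\tau)\Delta\tau$ for $s\in U$, so that $f(t)-f(s)-f^\Delta(t)(t-s)=\int_s^t(f^\Delta(\tau)-f^\Delta(t))\Delta\tau$ has absolute value at most $\varepsilon|t-s|$. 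This is exactly the defining estimate of the nabla derivative at the left-dense point $t$, yielding $f^\nabla(t)=f^\Delta(t)=(f^\Delta)^\rho(t)$.

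For (ii) I would mirror the above with $\sigma$ in place of $\rho$. If $t\in\T^\kappa$ is right-scattered, then $\rho(\sigma(t))=t$ and $\nu(\sigma(t))=\sigma(t)-t>0$; applying nabla differentiability of $f$ at $\sigma(t)$ recovers the delta difference quotient of $f$ at $t$ directly. If $t$ is right-dense, a continuity and averaging argument analogous to the one above, now based on the continuity of $f^\nabla$ and the nabla fundamental theorem, furnishes the estimate that witnesses $f^\Delta(t)=f^\nabla(t)=(f^\nabla)^\sigma(t)$. In both parts the only delicate step is the dense subcase, where continuity of the respective derivative is essential to guarantee existence of the otherwise missing one-sided limit.
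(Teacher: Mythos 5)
The paper itself gives no proof of this proposition: it is quoted directly from Atici and Guseinov (Theorems~2.5 and 2.6 of \cite{A:G:02}), so there is no internal argument to compare against; your direct proof is essentially the standard one, and its structure is sound. The left-dense case is handled correctly: for such $t$ one has $\rho(t)=t\in\T^\kappa$, and your estimate $|f(t)-f(s)-f^\Delta(t)(t-s)|=\left|\int_s^t\bigl(f^\Delta(\tau)-f^\Delta(t)\bigr)\Delta\tau\right|\le\varepsilon|t-s|$, obtained from continuity of $f^\Delta$ together with the fundamental theorem of calculus for the delta integral, is exactly the defining inequality of nabla differentiability at a left-dense point, giving $f^\nabla(t)=f^\Delta(t)=(f^\Delta)^\rho(t)$.

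The one step you should make explicit is in the left-scattered case. The nabla derivative at $t$ is defined by an inequality that must hold for all $s$ in a two-sided $\T$-neighborhood of $t$, so when $t$ is left-scattered but right-dense the single difference quotient does not by itself yield existence of $f^\nabla(t)$: one also needs continuity of $f$ at $t$, to control the points $s>t$ approaching $t$. This does not follow from delta differentiability at $\rho(t)$, which is the only hypothesis you invoke there; it does follow because such a $t$ necessarily lies in $\T^\kappa$, and delta differentiability of $f$ at $t$ implies continuity of $f$ at $t$. (If instead $t$ is the left-scattered maximum of $\T$, a small enough neighborhood reduces to $\{t\}$ and the quotient alone suffices, so your remark that continuity of $f^\Delta$ plays no role remains true.) With this observation added, part (i) is complete, and the symmetric remark --- nabla differentiability at a right-scattered $t\in\T_\kappa$ supplying the continuity of $f$ at $t$ needed when $t$ is also left-dense --- closes the corresponding step in part (ii); the rest of your mirrored argument for (ii) is fine.
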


\begin{proposition}{\rm (\cite[Theorem~2.8]{A:G:02}).}
\label{eq:prop} Let $a, b \in\mathbb{T}$ with $a \le b$ and let $f$
be a continuous function on $[a, b]$. Then,
\begin{equation*}
\begin{split}
\int_a^b f(t)\Delta t &= \int_a^{\rho(b)} f(t)\Delta t
+ (b - \rho(b))f^\rho(b) \, , \\
\int_a^b f(t)\Delta t &= (\sigma(a) - a) f(a)
+ \int_{\sigma(a)}^b f(t)\Delta t \, , \\
\int_a^b f(t)\nabla t &= \int_a^{\rho(b)} f(t)\nabla t
+ (b - \rho(b)) f(b) \, , \\
\int_a^b f(t)\nabla t &= (\sigma(a) - a) f^\sigma(a) +
\int_{\sigma(a)}^b f(t)\nabla t \, .
\end{split}
\end{equation*}
\end{proposition}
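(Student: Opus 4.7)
The plan is to establish each of the four identities by combining the standard additivity of the time scale integral with the elementary evaluation of integrals taken over a single jump. I would handle the two delta identities first and then the two nabla identities, in each case separating the subcases where the relevant endpoint is dense and where it is scattered.

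For the first delta identity, I would start from the additivity relation
\[
\int_a^b f(t)\,\Delta t = \int_a^{\rho(b)} f(t)\,\Delta t + \int_{\rho(b)}^b f(t)\,\Delta t,
\]
which is valid whenever $\rho(b) \in [a,b]\cap\mathbb{T}$, and trivially so when $b = \rho(b)$. If $b$ is left-dense, then $\rho(b) = b$ and $b - \rho(b) = 0$, so both sides of the claimed formula collapse to $\int_a^b f(t)\,\Delta t$. If $b$ is left-scattered, then $\sigma(\rho(b)) = b$, and the last integral is over a single delta jump, so it equals $\mu(\rho(b)) f(\rho(b)) = (b-\rho(b)) f^{\rho}(b)$. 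The second delta identity is obtained by the same scheme, now splitting at $\sigma(a)$, distinguishing the right-dense and right-scattered cases for $a$, and using $\int_a^{\sigma(a)} f(t)\,\Delta t = \mu(a) f(a) = (\sigma(a)-a) f(a)$ whenever $a$ is right-scattered.

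The two nabla identities are proved by the analogous argument, again splitting at $\rho(b)$ and at $\sigma(a)$, but now invoking the nabla-side single-jump formulas
\[
\int_{\rho(s)}^s f(t)\,\nabla t = \nu(s) f(s) = (s-\rho(s)) f(s)
\]
for the third identity (applied at $s = b$ when $b$ is left-scattered, so that $\nu(b) = b - \rho(b)$), and
\[
\int_a^{\sigma(a)} f(t)\,\nabla t = \nu(\sigma(a)) f(\sigma(a)) = (\sigma(a)-a) f^{\sigma}(a)
\]
for the fourth identity when $a$ is right-scattered; here the key observation is that $\rho(\sigma(a)) = a$, so $\nu(\sigma(a)) = \sigma(a) - a$. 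Continuity of $f$ on $[a,b]$ is used only to guarantee that all the integrals under consideration exist; no further hypothesis on $\mathbb{T}$ is required.

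I do not foresee a genuine obstacle: the whole statement is a direct consequence of the two basic ingredients (additivity of the integral plus the single-jump formula), and the only point worth checking carefully is that in the dense cases the boundary contributions $(b-\rho(b))f^{\rho}(b)$ and $(\sigma(a)-a)f(a)$ (respectively $(\sigma(a)-a)f^{\sigma}(a)$) automatically vanish, so that a single unified formula indeed covers both the dense and the scattered subcases.
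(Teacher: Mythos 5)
Your argument is correct. Note that the paper itself gives no proof of this proposition: it is imported verbatim from Atici and Guseinov (cited as Theorem~2.8 of \cite{A:G:02}) as a preliminary fact, so there is no in-paper argument to compare against. Your derivation --- additivity of the integral at the splitting point $\rho(b)$ or $\sigma(a)$, combined with the single-jump evaluations $\int_t^{\sigma(t)} f\,\Delta s=\mu(t)f(t)$ and $\int_{\rho(t)}^{t} f\,\nabla s=\nu(t)f(t)$, with the dense cases collapsing trivially --- is exactly the standard proof of this result, and you correctly isolate the one small point that needs checking in the fourth identity, namely that $a$ right-scattered forces $\rho(\sigma(a))=a$, hence $\nu(\sigma(a))=\sigma(a)-a$.
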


We end our brief review of the calculus on time scales with a
relationship between the delta and nabla integrals.

\begin{proposition}{\rm (\cite[Proposition~7]{G:G:S:05}).}
If function $f : \mathbb{T} \rightarrow \mathbb{R}$ is continuous,
then for all $a, b \in \mathbb{T}$ with $a < b$ we have
\begin{gather}
\int_a^b f(t) \Delta t = \int_a^b f^\rho(t) \nabla t \, , \label{eq:DtoN}\\
\int_a^b f(t) \nabla t = \int_a^b f^\sigma(t) \Delta t \, . \label{eq:NtoD}
\end{gather}
\end{proposition}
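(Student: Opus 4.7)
The plan is to reduce both identities to the fundamental theorem of calculus (delta and nabla versions) combined with Proposition~\ref{prop:rel:der}, which converts a delta derivative into a nabla derivative and vice versa. Since $f$ is continuous on $\T$, it is simultaneously rd-continuous and ld-continuous, so it admits both a delta antiderivative and a nabla antiderivative on $I$.

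For the identity \eqref{eq:DtoN}, I would pick a function $F$ with $F^\Delta=f$ on $\T^\kappa$. Because $F^\Delta=f$ is continuous, Proposition~\ref{prop:rel:der}(i) applies and gives $F^\nabla(t)=(F^\Delta)^\rho(t)=f^\rho(t)$ for every $t\in\T_\kappa$. Applying the fundamental theorems for the delta and nabla integrals then yields
\[
\int_a^b f(t)\,\Delta t = F(b)-F(a) = \int_a^b F^\nabla(t)\,\nabla t = \int_a^b f^\rho(t)\,\nabla t,
\]
which is exactly \eqref{eq:DtoN}. The identity \eqref{eq:NtoD} is obtained by the symmetric argument: take $G$ with $G^\nabla=f$ on $\T_\kappa$, invoke Proposition~\ref{prop:rel:der}(ii) to get $G^\Delta(t)=(G^\nabla)^\sigma(t)=f^\sigma(t)$ on $\T^\kappa$, and apply the fundamental theorem in both directions.

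The main conceptual step is thus the translation between antiderivatives offered by Proposition~\ref{prop:rel:der}; once that conversion is in hand, everything else is a direct appeal to the definition of delta/nabla integrals as differences of antiderivative values. The only small technical point to verify is that the shifts in the indexing sets ($\T^\kappa$ versus $\T_\kappa$) are harmless here, because integration on $[a,b]\cap\T$ only sees values on those restricted sets anyway; the integrands $f^\rho$ and $f^\sigma$ are well defined and continuous on the respective integration domains, so no boundary correction is needed, and the two identities follow cleanly without further case analysis.
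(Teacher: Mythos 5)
The paper does not prove this proposition at all: it is quoted verbatim from \cite[Proposition~7]{G:G:S:05} as a preliminary, so there is no internal proof to compare against. Your argument is correct and is essentially the standard one: take a delta antiderivative $F$ of $f$ (which exists because a continuous $f$ is rd-continuous), use Proposition~\ref{prop:rel:der}(i) to get $F^\nabla=f^\rho$ on $\T_\kappa$, and evaluate both integrals as $F(b)-F(a)$ via the delta and nabla fundamental theorems; the symmetric argument with Proposition~\ref{prop:rel:der}(ii) gives \eqref{eq:NtoD}. The only step you should state explicitly rather than wave at is the hypothesis of the nabla fundamental theorem: to write $\int_a^b F^\nabla(t)\,\nabla t=F(b)-F(a)$ you need $F^\nabla=f^\rho$ to be ld-continuous (and, in the second identity, $f^\sigma$ to be rd-continuous), which does follow from the continuity of $f$ since $\rho(s)\to t$ as $s\to t^-$ at left-dense points and $\sigma(s)\to t$ as $s\to t^+$ at right-dense points; with that observation the proof is complete.
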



\section{MAIN RESULTS}
\label{E-L_equation}

Let $\mathbb{T}$ be a given time scale with $a, b \in \mathbb{T}$, $a < b$,
and $\mathbb{T} \cap (a,b) \ne \emptyset$;
$L_{\Delta}(\cdot,\cdot,\cdot)$ and $L_{\nabla}(\cdot,\cdot,\cdot)$ be two given smooth
functions from $\T \times \mathbb{R}^2$ to $\mathbb{R}$ and $\Ga,\Gb\in\R$.
The results of this section are trivially generalized for
admissible functions $y : \T\rightarrow\mathbb{R}^n$
but for simplicity of presentation
we restrict ourselves to the scalar case $n=1$.


\subsection{The delta-nabla calculus of variations}

We consider the delta-nabla integral functional
\begin{multline}
\label{eq:P}
\g(y)
= \Ga\int_a^b L_{\Delta}\left(t,y^\sigma(t),y^\Delta(t)\right) \Delta t\\
+ \Gb\int_a^b L_{\nabla}\left(t,y^\rho(t),y^\nabla(t)\right) \nabla t \, .
\end{multline}

One of our goals is to find the Euler-Lagrange equation
for $\g(y)$ defined by \eqref{eq:P}.
For simplicity of notation we introduce the operators
$[y]$ and $\{y\}$ defined by
\begin{equation*}
[y](t) = \left(t,y^\sigma(t),y^\Delta(t)\right) \, ,
\ \  \{y\}(t) = \left(t,y^\rho(t),y^\nabla(t)\right) \, .
\end{equation*}
Then,
\begin{equation*}
\begin{split}
\g_\Delta(y) &= \int_a^b L_\Delta[y](t) \Delta t \, , \\
\g_\nabla(y) &= \int_a^b L_\nabla\{y\}(t) \nabla t \, ,\\
\g(y) &= \Ga\g_\Delta(y)+\Gb \g_\nabla(y)\\
&=\Ga\int_a^b L_{\Delta}[y](t) \Delta t
+
\Gb\int_a^b L_\nabla\{y\}(t)  \nabla t  \, .
\end{split}
\end{equation*}

\begin{remark}
\label{obs} In the particular case $\Ga=0$
\eqref{eq:P} reduces to $\g(y) = \g_\nabla(y)$;
when $\Gb=0$ functional \eqref{eq:P} reduces to
$\g(y) = \g_\Delta(y)$.
\end{remark}

The delta-nabla problem of the calculus of variations
on time scales under our consideration consists of extremizing
\begin{equation}
\label{problem:P}
\mathcal{L}(y) = \Ga\int_a^b L_{\Delta}[y](t) \Delta t +
\Gb\int_a^b L_{\nabla}\{y\}(t) \nabla t
\end{equation}
in the class of functions $y \in C_{\diamond}^{1}(I, \mathbb{R})$,
where $C_{\diamond}^1$ denotes the class of functions
$y : I\rightarrow\mathbb{R}$  with
$y^\Delta$ continuous on $I^\kappa$
and $y^\nabla$ continuous on $I_\kappa$,
and satisfying the boundary conditions
\begin{equation}
\label{bou:con}
y(a) = \alpha \, , \quad y(b) = \beta \,
\end{equation}
with $\alpha$ and $\beta$ given real numbers.
A function $y \in C_{\diamond}^{1}(I, \mathbb{R})$ is said to be
\emph{admissible} provided it satisfies conditions \eqref{bou:con}.

\begin{definition}
\label{def:minimizer}
We say that $\hat{y}\in C_{\diamond}^{1}(I,
\mathbb{R})$ is a weak local minimizer (respectively weak local
maximizer) for problem \eqref{problem:P}--\eqref{bou:con} if there
exists $\delta >0$ such that
$\mathcal{L}(\hat{y})\leq \mathcal{L}(y)$ (respectively
$\mathcal{L}(\hat{y}) \geq \mathcal{L}(y)$)
for all $y \in C_{\diamond}^{1}(I, \mathbb{R})$ satisfying the boundary
conditions \eqref{bou:con} and
$\parallel y - \hat{y}\parallel_{1,\infty} < \delta$, where
$$
\parallel y\parallel_{1,\infty}:=
\parallel y^{\sigma}\parallel_{\infty}
+ \parallel y^{\rho}\parallel_{\infty} + \parallel
y^{\Delta}\parallel_{\infty} + \parallel
y^{\nabla}\parallel_{\infty}
$$
and
$\parallel y\parallel_{\infty} :=\sup_{t \in
I_{\kappa}^{\kappa}}\mid y(t) \mid$.
\end{definition}

Let $\partial_{i}L$ denote the standard
partial derivative of $L(\cdot,\cdot,\cdot)$
with respect to its $i$th variable, $i = 1,2,3$.
Theorem~\ref{thm:mr1} gives two different forms
for the Euler-Lagrange equation on time scales
associated with variational problem
\eqref{problem:P}--\eqref{bou:con}.

\begin{theorem}{\rm (The delta-nabla Euler-Lagrange equations
on time scales).}
\label{thm:mr1}
If $\hat{y} \in C_{\diamond}^{1}(I,\R)$ is a weak local extremizer of problem
\eqref{problem:P}--\eqref{bou:con}, then $\hat{y}$ satisfies
the following delta-nabla integral equations:
\begin{multline}
\label{eq:EL1}
\gamma_1
\left(\partial_3 L_\Delta[\hat{y}](\rho(t))
-\int_{a}^{\rho(t)} \partial_2 L_\Delta[\hat{y}](\tau) \Delta\tau\right)\\
+
\gamma_2\left(\partial_3 L_\nabla\{\hat{y}\}(t)
-\int_{a}^{t} \partial_2 L_\nabla\{\hat{y}\}(\tau) \nabla\tau\right)
= \text{const}
\end{multline}
for all $t \in I_\kappa$; and
\begin{multline}
\label{eq:EL2}
\gamma_1\left(\partial_3 L_\Delta[\hat{y}](t)
-\int_{a}^{t} \partial_2 L_\Delta[\hat{y}](\tau) \Delta\tau\right)\\
+
\gamma_2\left(\partial_3 L_\nabla\{\hat{y}\}(\sigma(t))
-\int_{a}^{\sigma(t)} \partial_2 L_\nabla\{\hat{y}\}(\tau) \nabla\tau\right) = \text{const}
\end{multline}
for all $t \in I^\kappa$.
\end{theorem}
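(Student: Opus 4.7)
The plan is to follow the classical first-variation argument adapted to the time-scale setting, and then to reconcile the delta- and nabla-integrals using the conversion identities from Section~\ref{sec:prelim}.

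First, I would fix an arbitrary $\eta \in C_{\diamond}^{1}(I,\mathbb{R})$ with $\eta(a)=\eta(b)=0$ and consider the real-valued map $\phi(\varepsilon):=\mathcal{L}(\hat{y}+\varepsilon\eta)$. Since $\hat{y}$ is a weak local extremizer and $\|\varepsilon\eta\|_{1,\infty}\to 0$ as $\varepsilon\to 0$, $\phi$ attains a local extremum at $0$, so $\phi'(0)=0$. Differentiating under the integral sign (justified by the smoothness of $L_\Delta$ and $L_\nabla$) yields
\[
0 = \gamma_1\!\int_a^b\!\bigl[\partial_2 L_\Delta[\hat{y}]\,\eta^\sigma + \partial_3 L_\Delta[\hat{y}]\,\eta^\Delta\bigr]\Delta t + \gamma_2\!\int_a^b\!\bigl[\partial_2 L_\nabla\{\hat{y}\}\,\eta^\rho + \partial_3 L_\nabla\{\hat{y}\}\,\eta^\nabla\bigr]\nabla t.
\]

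I would then eliminate the $\eta^\sigma$ and $\eta^\rho$ terms by integration by parts. Setting $A(t):=\int_a^t \partial_2 L_\Delta[\hat{y}](\tau)\Delta\tau$ and $B(t):=\int_a^t \partial_2 L_\nabla\{\hat{y}\}(\tau)\nabla\tau$, so that $A^\Delta=\partial_2 L_\Delta[\hat{y}]$, $B^\nabla=\partial_2 L_\nabla\{\hat{y}\}$ and $A(a)=B(a)=0$, the second and third formulas in~\eqref{intBP} together with $\eta(a)=\eta(b)=0$ reduce the displayed identity to
\[
0 = \gamma_1\!\int_a^b F(t)\,\eta^\Delta(t)\,\Delta t + \gamma_2\!\int_a^b G(t)\,\eta^\nabla(t)\,\nabla t,
\]
where $F:=\partial_3 L_\Delta[\hat{y}]-A$ and $G:=\partial_3 L_\nabla\{\hat{y}\}-B$. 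To apply a Dubois--Reymond-type conclusion one needs a single integral: identity~\eqref{eq:DtoN} combined with $\eta^\nabla=(\eta^\Delta)^\rho$ from~\eqref{eq:chgN_to_D} rewrites the delta piece as $\int_a^b F^\rho\,\eta^\nabla\,\nabla t$, and Lemma~\ref{DBRL:n} applied to $\gamma_1 F^\rho+\gamma_2 G$ gives~\eqref{eq:EL1} on $I_\kappa$. Symmetrically, \eqref{eq:NtoD} and $\eta^\Delta=(\eta^\nabla)^\sigma$ from~\eqref{eq:chgD_to_N} convert the nabla piece into $\int_a^b G^\sigma\,\eta^\Delta\,\Delta t$, and Lemma~\ref{DBRL:d} then produces~\eqref{eq:EL2} on $I^\kappa$.

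The main obstacle is not any single computation but the regularity bookkeeping: one must check that $F^\rho$ is ld-continuous and $G^\sigma$ is rd-continuous so that the two Dubois--Reymond lemmas actually apply, and that the pointwise identities from Proposition~\ref{prop:rel:der} are legitimate for test functions in $C_{\diamond}^{1}(I,\mathbb{R})$. Both follow from the smoothness of $L_\Delta$, $L_\nabla$ and from the definition of $C_{\diamond}^{1}$, but it is precisely at this step that one must be careful to pin down the exact subsets ($I_\kappa$ versus $I^\kappa$) on which each of the two equivalent forms of the Euler--Lagrange equation is valid.
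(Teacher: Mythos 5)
Your proposal is correct and follows essentially the same route as the paper: first variation $\phi'(0)=0$, integration by parts via the antiderivatives $A$ and $B$ of $\partial_2 L_\Delta[\hat y]$ and $\partial_2 L_\nabla\{\hat y\}$, conversion of the resulting mixed identity to a single nabla (respectively delta) integral using \eqref{eq:DtoN}--\eqref{eq:NtoD} together with Proposition~\ref{prop:rel:der}, and then the nabla and delta Dubois--Reymond lemmas on $I_\kappa$ and $I^\kappa$. The only cosmetic difference is which of the equivalent integration-by-parts formulas in \eqref{intBP} you cite (second and fourth roles swapped versus the paper's first and third), and your extra remark on the ld-/rd-continuity needed for the lemmas is a harmless refinement the paper leaves implicit.
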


\begin{proof}
Suppose that $\g$
has a weak local extremum at $\hat{y}$. We
consider the value of $\g$ at nearby
functions $\hat{y} + \varepsilon \eta$,
where $\varepsilon\in \mathbb{R}$ is a small parameter
and $\eta \in C_{\diamond}^{1}(I,\R)$ with $\eta(a)=\eta(b)=0$.
Thus, function $\phi(\varepsilon)
= \g(\hat{y} + \varepsilon \eta)$
has an extremum at $\varepsilon = 0$. Using the first-order
necessary optimality condition
$\left.\phi'(\varepsilon)\right|_{\varepsilon = 0} = 0$ we obtain:
\begin{multline}
\label{eq:prf:+}
\gamma_1\int_a^b
\left(\partial_2 L_\Delta[\hat{y}](t) \eta^\sigma(t)
+ \partial_3 L_\Delta[\hat{y}](t) \eta^\Delta(t)\right) \Delta t\\
+ \gamma_2\int_a^b
\left(\partial_2 L_\nabla\{\hat{y}\}(t) \eta^\rho(t)
+ \partial_3 L_\nabla\{\hat{y}\}(t) \eta^\nabla(t)\right)
\nabla t = 0 \, .
\end{multline}
Let
\begin{equation*}
A(t) = \int_a^t \partial_2 L_\Delta[\hat{y}](\tau) \Delta\tau \, , \ \
B(t) = \int_a^t \partial_2 L_\nabla\{\hat{y}\}(\tau) \nabla\tau \, .
\end{equation*}
Then, $A^\Delta(t) = \partial_2 L_\Delta[\hat{y}](t)$,
$B^\nabla(t) = \partial_2 L_\nabla\{\hat{y}\}(t)$,
and the first and third integration by parts formula in \eqref{intBP} tell us, respectively, that
\begin{equation*}
\begin{split}
\int_a^b \partial_2 & L_\Delta[\hat{y}](t) \eta^\sigma(t) \Delta t\\
&= \int_a^b A^\Delta(t) \eta^\sigma(t) \Delta t\\
&= \left. A(t) \eta(t)\right|_{t=a}^{t=b} - \int_a^b A(t) \eta^\Delta(t) \Delta t\\
&= - \int_a^b A(t) \eta^\Delta(t) \Delta t
\end{split}
\end{equation*}
and
\begin{equation*}
\begin{split}
\int_a^b \partial_2 & L_\nabla\{\hat{y}\}(t) \eta^\rho(t) \nabla t\\
&= \int_a^b B^\nabla(t) \eta^\rho(t) \nabla t\\
&= \left. B(t) \eta(t)\right|_{t=a}^{t=b}
- \int_a^b B(t) \eta^\nabla(t) \nabla t\\
&= - \int_a^b B(t) \eta^\nabla(t) \nabla t \, .
\end{split}
\end{equation*}
If we denote $f(t) = \partial_3 L_\Delta[\hat{y}](t) - A(t)$
and $g(t) = \partial_3 L_\nabla\{\hat{y}\}(t) - B(t)$,
then we can write the necessary optimality condition
\eqref{eq:prf:+} in the form
\begin{equation}
\label{eq:prf:+:aftIP}
\gamma_1\int_a^b f(t) \eta^\Delta(t) \Delta t
+ \gamma_2\int_a^b g(t) \eta^\nabla(t) \nabla t = 0 \, .
\end{equation}
We now split the proof in two parts:
(i) we prove \eqref{eq:EL1} transforming the delta integral
in \eqref{eq:prf:+:aftIP} to a nabla integral by means of
\eqref{eq:DtoN}; (ii) we prove \eqref{eq:EL2} transforming
the nabla integral in \eqref{eq:prf:+:aftIP} to a delta integral by means of \eqref{eq:NtoD}.

(i) By \eqref{eq:DtoN} the necessary optimality condition \eqref{eq:prf:+:aftIP} is equivalent to
\begin{equation*}
\int_a^b \left(\gamma_1f^\rho(t) (\eta^\Delta)^\rho(t)
+ \gamma_2 g(t) \eta^\nabla(t)\right) \nabla t = 0
\end{equation*}
and by \eqref{eq:chgN_to_D} to
\begin{equation}
\label{eq:bef:FL1}
\int_a^b \left(\gamma_1f^\rho(t)
+ \gamma_2g(t)\right)
\eta^\nabla(t) \nabla t = 0 \, .
\end{equation}
Applying Lemma~\ref{DBRL:n} to \eqref{eq:bef:FL1}
we prove \eqref{eq:EL1}:
\begin{equation*}
\label{eq:S12}
\gamma_1f^\rho(t)
+ \gamma_2g(t) = c \quad \forall t \in I_\kappa \, ,
\end{equation*}
where $c$ is a constant.

(ii) By \eqref{eq:NtoD} the necessary optimality condition \eqref{eq:prf:+:aftIP} is equivalent to
\begin{equation*}
\int_a^b \left(\gamma_1f(t) \eta^\Delta(t)
+ \gamma_2g^\sigma(t) \left(\eta^\nabla\right)^\sigma(t)\right) \Delta t = 0
\end{equation*}
and by \eqref{eq:chgD_to_N} to
\begin{equation}
\label{eq:bef:FL2}
\int_a^b \left(\gamma_1f(t)
+ \gamma_2g^\sigma(t)\right)
\eta^\Delta(t) \Delta t = 0 \, .
\end{equation}
Applying Lemma~\ref{DBRL:d} to \eqref{eq:bef:FL2}
we prove \eqref{eq:EL2}:
\begin{equation*}
\gamma_1f(t)
+  \gamma_2g^\sigma(t) = c \quad \forall t \in I^\kappa \, ,
\end{equation*}
where $c$ is a constant.
\end{proof}

\begin{example}
\label{example1}
Let $\T=\{1,3,4\}$, $\Ga,\Gb$ be arbitrary real numbers,
and consider the problem
\begin{equation}
\label{ex:3}
\begin{gathered}
\min\ \mathcal{L}(y) = \Ga\int^4_1t\left(y^\Delta(t)\right)^2\Delta t +
\Gb\int^4_1t\left(y^\nabla(t)\right)^2\nabla t
\\
y(1)=0,\ \ y(4)=1.
\end{gathered}
\end{equation}
Since
\[L_{\Delta}=t\left(y^{\Delta}\right)^2,\ \ \ L_{\nabla}=t\left(y^{\nabla}\right)^2,\]
we have
\[
\partial_2L_{\Delta}=0,\ \ \partial_3L_{\Delta}=2ty^{\Delta},
\ \ \partial_2L_{\nabla}=0,\ \ \partial_3L_{\nabla}=2ty^{\nabla}.
\]
Using equation~\eqref{eq:EL2} of Theorem~\ref{thm:mr1} we get
\begin{equation}\label{ex:1}
2\Ga ty^{\Delta}(t)+2\Gb \sigma(t)y^{\nabla}(\sigma(t))=C
\end{equation}
where $C\in\R$. By \eqref{eq:chgD_to_N} we can rewrite equation~\eqref{ex:1} in the form
\begin{equation}\label{ex:2}
2\Ga ty^{\Delta}(t)+2\Gb \sigma(t)y^{\Delta}(t)=C.
\end{equation}
Observe that $\Ga,\Gb$ cannot vanish simultaneously.
Solving equation~\eqref{ex:2} subject to the boundary conditions
$y(1)=0$ and $y(4)=1$ we get a candidate for a
local minimizer of problem \eqref{ex:3}:
\begin{equation}
\label{extremal:ex1}
y(t)=\begin{cases}
        0 & \text{if $t=1$}\\
        \frac{6\Ga+8\Gb}{7\Ga+11\Gb} & \text{if $t=3$}\\
        1 & \text{if $t=4$.}
        \end{cases}
\end{equation}
\end{example}

\begin{theorem}
\label{sc}
Let $ L_\Delta(\cdot,\cdot,\cdot)$ and $L_\nabla(\cdot,\cdot,\cdot)$
be jointly convex (concave) with respect to the second and third
argument for any $t\in I$, and $\Ga,\Gb\geq 0$. If $\hat{y} \in
C_{\diamond}^{1}(I,\R)$ is admissible and satisfies equation \eqref{eq:EL1}
(equivalently \eqref{eq:EL2}), then $\hat{y}$ is a global minimizer
(maximizer) of problem \eqref{problem:P}--\eqref{bou:con}.
\end{theorem}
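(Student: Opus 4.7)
The plan is to use the standard sufficiency argument from the classical calculus of variations, adapted to the delta-nabla setting. I would take an arbitrary admissible competitor $y=\hat{y}+\eta$ with $\eta\in C^1_\diamond(I,\mathbb{R})$, $\eta(a)=\eta(b)=0$, and show directly that $\mathcal{L}(y)-\mathcal{L}(\hat{y})\geq 0$ in the convex case (the concave/maximizer case being symmetric).

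First I would apply joint convexity of $L_\Delta$ and $L_\nabla$ in the second and third arguments pointwise in $t$, yielding, for every $t$,
\begin{equation*}
L_\Delta[y](t)-L_\Delta[\hat{y}](t)\geq \partial_2 L_\Delta[\hat{y}](t)\,\eta^\sigma(t)+\partial_3 L_\Delta[\hat{y}](t)\,\eta^\Delta(t)
\end{equation*}
and the analogous inequality for $L_\nabla\{y\}-L_\nabla\{\hat{y}\}$ in terms of $\eta^\rho$ and $\eta^\nabla$. Since $\gamma_1,\gamma_2\geq 0$, I may multiply by $\gamma_1$ (resp.\ $\gamma_2$), integrate with respect to $\Delta t$ (resp.\ $\nabla t$) on $[a,b]$, and add the two inequalities, obtaining
\begin{equation*}
\mathcal{L}(y)-\mathcal{L}(\hat{y})\geq R(\eta),
\end{equation*}
where $R(\eta)$ is precisely the left-hand side of \eqref{eq:prf:+}. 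The claim then reduces to showing $R(\eta)=0$ for every such $\eta$.

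The second step is exactly the manipulation carried out in the proof of Theorem~\ref{thm:mr1}. I would introduce the antiderivatives $A$ and $B$ of $\partial_2 L_\Delta[\hat{y}]$ and $\partial_2 L_\nabla\{\hat{y}\}$, apply the delta and nabla integration-by-parts formulas from \eqref{intBP} (the boundary terms vanish since $\eta(a)=\eta(b)=0$), and rewrite $R(\eta)$ as
\begin{equation*}
\gamma_1\int_a^b f(t)\eta^\Delta(t)\,\Delta t+\gamma_2\int_a^b g(t)\eta^\nabla(t)\,\nabla t,
\end{equation*}
with $f=\partial_3 L_\Delta[\hat{y}]-A$ and $g=\partial_3 L_\nabla\{\hat{y}\}-B$. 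Converting the delta integral to a nabla integral via \eqref{eq:DtoN} and using \eqref{eq:chgN_to_D} brings this to the form
\begin{equation*}
\int_a^b\bigl(\gamma_1 f^\rho(t)+\gamma_2 g(t)\bigr)\eta^\nabla(t)\,\nabla t.
\end{equation*}

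Finally I would invoke the hypothesis that $\hat{y}$ satisfies the Euler-Lagrange equation \eqref{eq:EL1}, which asserts $\gamma_1 f^\rho(t)+\gamma_2 g(t)\equiv c$ on $I_\kappa$ for some constant $c$. Hence the last integral equals $c\int_a^b\eta^\nabla(t)\,\nabla t=c\bigl(\eta(b)-\eta(a)\bigr)=0$. This gives $\mathcal{L}(y)\geq\mathcal{L}(\hat{y})$ for every admissible $y$, proving global minimality. The main obstacle is really only bookkeeping: making sure that the two integration-by-parts reductions and the delta-to-nabla conversion are justified in precisely the same regularity class $C^1_\diamond$ used in Theorem~\ref{thm:mr1}; since the arguments are literally the ones already used there, no new analytical difficulty arises, and the concave/maximizer statement follows by reversing the inequalities throughout.
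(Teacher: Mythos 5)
Your proposal is correct and follows essentially the same route as the paper: convexity gives $\mathcal{L}(y)-\mathcal{L}(\hat{y})\geq$ the first variation at $\hat{y}$, which is then reduced exactly as in the proof of Theorem~\ref{thm:mr1} and annihilated using \eqref{eq:EL1} together with the boundary conditions. If anything, your handling of the final step is slightly more explicit than the paper's, since you note that the Euler--Lagrange equation only makes the integrand a constant $c$ and that the integral then vanishes because $c\int_a^b\eta^\nabla(t)\,\nabla t=c\bigl(\eta(b)-\eta(a)\bigr)=0$.
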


\begin{proof}
We shall give the proof for the convex case. In this case we want to
show that the difference $\g(y) -\g(\hat{y})$ is greater or
equal than zero for any admissible $y$. Since $
L_\Delta(\cdot,\cdot,\cdot)$ and $L_\nabla(\cdot,\cdot,\cdot)$ are
jointly convex with respect to the second and third argument, we have
\begin{equation*}
\begin{split}
\g(y) -\g(\hat{y}) &= \Ga\int_a^b
\left(L_{\Delta}[y](t)-L_{\Delta}[\hat{y}](t)\right) \Delta t \\
&\quad + \Gb\int_a^b \left(L_\nabla\{y\}(t)-L_\nabla\{\hat{y}\}(t)\right)
\nabla t\\
&\geq \Ga\int_a^b \left[(y^\sigma(t)
-\hat{y}^\sigma(t))\partial_2L_{\Delta}[\hat{y}](t)\right.\\
&\quad +\left. (y^\Delta(t)
-\hat{y}^\Delta(t))\partial_3L_{\Delta}[\hat{y}](t)\right] \Delta
t\\
&\quad + \Gb\int_a^b \left[(y^\rho(t)
-\hat{y}^\rho(t))\partial_2L_\nabla\{\hat{y}\}(t)\right.\\
&\quad + \left. (y^\nabla(t)
-\hat{y}^\nabla(t))\partial_3L_\nabla\{\hat{y}\}(t)\right] \nabla t.
\end{split}
\end{equation*}
We can now proceed analogously to the proof of Theorem~\ref{thm:mr1}.
As result we get
\begin{multline*}
\g(y) -\g(\hat{y}) \\
\geq\int_a^b (y^\nabla(t)
-\hat{y}^\nabla(t))\Biggl[\Ga\Bigl(\partial_3
L_\Delta[\hat{y}](\rho(t))\\
\qquad - \int_{a}^{\rho(t)} \partial_2 L_\Delta[\hat{y}](\tau) \Delta\tau\Bigr)\\
+ \Gb\left(\partial_3 L_\nabla\{\hat{y}\}(t) -\int_{a}^{t}
\partial_2 L_\nabla\{\hat{y}\}(\tau) \nabla\tau\right)\Biggr]\nabla
t\\
+\left. A(t) (y(t)-\hat{y}(t))\right|_{t=a}^{t=b}+\left. B(t)
(y(t)-\hat{y}(t))\right|_{t=a}^{t=b}.
\end{multline*}
Clearly, the first term is equal to zero, since $\hat{y}$ is a
solution to the Euler-Lagrange equation \eqref{eq:EL1},
and the second and third terms
are also equal to zero since $y$ is admissible.
Therefore, $\g(y) \geq\g(\hat{y})$.
\end{proof}

\begin{example}
Consider again problem \eqref{ex:3} from Example~\ref{example1}
with $\T=\{1,3,4\}$. For fixed $\Ga,\Gb\geq0$ the assumptions
of Theorem~\ref{sc} are fulfilled and we conclude that \eqref{extremal:ex1}
is indeed the minimizer of \eqref{ex:3}.
\end{example}


\subsection{Calculus of Variations and Directional Derivatives}
\label{sec:direc deriv}

Let $\Box$ denote $\Delta$ or $\nabla$,
and $\xi$ denote $\sigma$ or $\rho$.
The proofs of Theorems~\ref{thm:mr1} and \ref{sc}
can be technically adapted to deal with the
more general variational problem
$$
\g(y) = \sum_{i=1}^{m} \int_a^b L_i\left(t,y^\xi(t),y^\Box(t)\right) \Box t
$$
or, even more general, to a functional given by the composition
of $m$ integrals:
\begin{multline*}
\g(y) = H\left(\int_a^b L_1\left(t,y^\xi(t),y^\Box(t)\right) \Box t,\right.\\
\left.\ldots, \int_a^b L_m\left(t,y^\xi(t),y^\Box(t)\right) \Box t\right),
\end{multline*}
where $H : \R^m\rightarrow \R$. We discuss here how to give
a precise unified treatment to each one of the terms
\begin{equation*}
\int_a^b L_i\left(t,y^\xi(t),y^\Box(t)\right)\Box t \, ,
\quad i = 1,\ldots,m \, .
\end{equation*}
For that we make use of directional derivatives. We begin by gathering
some basic definitions and notations.
Firstly, we recall the following general definition.
\begin{definition}
\label{def:epi}
Let $X$ be any nonempty subset of $\R$.
By \emph{the epigraph of} $f:X\rightarrow\R$,
denoted by $\Epi(f)$,  we mean the following set:
$\Epi(f):=\{(t,\lambda)\in X\times \R : f(t)\leq \lambda \}$.
\end{definition}

If $X=\T$ is a time scale, then we can rewrite the same definition
of epigraph and introduce the following extension of the epigraph
of a function  $f:\T\rightarrow\R$.
By $G(f)$ we denote the following set:
\begin{multline*}
G(f)=\bigcup_{t\in \T}\Bigl\{\alpha(t,y)+\beta(\sigma(t),z): \\
\, y\geq f(t), z\geq f^{\sigma}(t), \alpha+\beta=1, \alpha, \beta \geq 0\Bigr\}\, .
\end{multline*}
Let $X=I$.
Using the formulation of $G(f)$  we can assign to $f:X\rightarrow\R$
a new function $\overline{f}:[a,b]\rightarrow \R$ by the condition
\begin{equation}
\label{eq:f bar}
\Epi(\overline{f})=G(f).
\end{equation}
Let us notice that for $f,g:X\rightarrow \R$ and $a,b\in\R$
the following holds: $a\overline{f}+b\overline{g}=\overline{af+bg}$.

\begin{remark}
\label{rem:epi}
Function $\overline{f}$ defined by formula \eqref{eq:f bar}
can be presented in the following way (see, \textrm{e.g.}, \cite{Di}):
\[\overline{f}(t)=
\begin{cases}
f(t), &\text{if $t\in \T$} \, ,\\
f(s)+\frac{f(\sigma(s))-f(s)}{\mu(s)}(t-s), &\text{if $t\in(s,\sigma(s))$}\, ,
\end{cases}\]
when $s\in \T$ is right-scattered; or
\[\overline{f}(t)=
\begin{cases}
f(t), &\text{if $t\in \T$}\, ,\\
f(s)+\frac{f(s)-f(\rho(s))}{\nu(s)}(t-s), &\text{if $t\in(\rho(s),s)$}\, ,
\end{cases}
\]
when $s\in \T$ is left-scattered.
\end{remark}

\begin{proposition}[\cite{G:M:W:09}]
Let $f:I\rightarrow\R$. Then the following statements are equivalent:

a) The set $G(f)$ is convex;

b) $\overline{f}$ is convex in $[a,b]$;

c)  $f$ is convex in $X$.

\end{proposition}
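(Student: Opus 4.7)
The plan is to establish the cycle (a) $\Leftrightarrow$ (b) $\Leftrightarrow$ (c). The first equivalence follows almost by definition: the identity $\Epi(\overline{f}) = G(f)$ in \eqref{eq:f bar}, combined with the classical fact that a real-valued function on a real interval is convex if and only if its epigraph is a convex subset of $\R^2$, gives that $G(f)$ is convex precisely when $\overline{f}$ is convex on $[a,b]$. Since $[a,b]$ is a genuine interval of $\R$, this standard epigraph--convexity dictionary applies to $\overline{f}$ without modification.

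For (b) $\Rightarrow$ (c), I would use that $\overline{f}(t)=f(t)$ for every $t\in X$ by Remark~\ref{rem:epi}, so any convexity inequality for $\overline{f}$ whose three arguments lie in $X$ is the corresponding inequality for $f$. For the converse (c) $\Rightarrow$ (b), I would verify the three-slope criterion of convexity on $[a,b]$: for any $u<v<w$ in $[a,b]$,
$$\frac{\overline{f}(v)-\overline{f}(u)}{v-u}\leq \frac{\overline{f}(w)-\overline{f}(v)}{w-v}.$$
Because $\overline{f}$ is affine on each closed interval $[s,\sigma(s)]$ arising from a right-scattered $s\in\T$, the $\overline{f}$-slope across any subinterval of such a gap coincides with the $f$-slope between its two $\T$-endpoints. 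Splitting $[u,v]$ and $[v,w]$ at the $\T$-endpoints of whichever gaps they cross, the required inequality reduces to a finite collection of slope comparisons among triples of points that lie entirely in $\T$, which are exactly the data controlled by the monotone-slope form of the convexity hypothesis on $f$.

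The main obstacle will be the case analysis in (c) $\Rightarrow$ (b) when one or more of $u$, $v$, $w$ sit inside gaps $(s,\sigma(s))$: the reduction must be organized so that every slope produced on the $\T$-side originates from a pair of points actually in $X$. This is exactly what affinity of $\overline{f}$ on each gap delivers, but enumerating the sub-cases (the three points in the same gap, in adjacent gaps, one at a point of $\T$ and the others in gaps, etc.) is where the bookkeeping becomes delicate; a clean way to handle it is to first observe that inserting the $\T$-endpoints of any crossed gap as intermediate points preserves the three-slope inequality, and then iterate until all points involved lie in $X$.
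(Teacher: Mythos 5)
The paper does not prove this proposition at all: it is imported verbatim from the cited reference \cite{G:M:W:09}, so there is no in-paper argument to compare yours against. Judged on its own, your proof is correct and is the natural one. The equivalence (a)$\Leftrightarrow$(b) is indeed purely definitional, since $\overline{f}$ is \emph{defined} by $\Epi(\overline{f})=G(f)$ and convexity of a function on a real interval is equivalent to convexity of its epigraph. For (b)$\Rightarrow$(c) you should make explicit which notion of convexity on a time scale is being used (the one of Dinu \cite{Di}: $f(\lambda t_1+(1-\lambda)t_2)\leq \lambda f(t_1)+(1-\lambda)f(t_2)$ whenever the convex combination again lies in $\T$); with that definition the implication is exactly the restriction argument you give, because every convexity inequality demanded of $f$ involves only points of $X$, where $\overline{f}=f$. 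For (c)$\Rightarrow$(b), your reduction works, but the phrase ``inserting the $\T$-endpoints of any crossed gap preserves the three-slope inequality'' should be replaced by the precise chaining fact you actually need: if $u=x_0<x_1<\dots<x_n=w$ is a refinement containing $v$ and the consecutive inequalities $\frac{\overline{f}(x_i)-\overline{f}(x_{i-1})}{x_i-x_{i-1}}\leq\frac{\overline{f}(x_{i+1})-\overline{f}(x_i)}{x_{i+1}-x_i}$ hold, then, since the slope over a union of consecutive intervals is a convex combination of the slopes over the pieces, the coarse inequality for $(u,v,w)$ follows. Each consecutive slope in your refinement (obtained by inserting the endpoints of the at most three gaps containing $u,v,w$) is either an $f$-slope between two points of $X$ or the affine slope of a gap, which again equals the $f$-slope between the gap's endpoints in $X$; these $X$-pairs are nested left to right, so monotonicity of $f$-slopes over triples in $X$ (which is exactly statement (c), since any middle point of three points of $X$ is a convex combination of the outer two landing in $X$) gives the consecutive inequalities. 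With these two clarifications your argument is complete and self-contained, which is more than the paper itself offers for this statement.
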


\begin{definition}\label{def:direct}
Let $[a,b]$ be a real interval and let $\overline{f}:
[a,b]\to \mathbb{R}$ be defined by formula~(\ref{eq:f bar}).
We say that the function defined by
\begin{equation}\label{eq:deriv}
 \ \  D\overline{f}(t)(u)\mathrel{\mathop:}
 =\lim_{h\to 0^+} \displaystyle {\overline{f}(t+hu)-\overline{f}(t)\over h}
\end{equation}
is the \emph{derivative from the right of $\overline{f}$
at $t$ in the direction $u$} if the limit
of the right-hand side of (\ref{eq:deriv}) exists.
If $D\overline{f}(t)(u)$ exists for all directions $u$, we say that
$\overline{f}$ \emph{is differentiable from the right at} $t$.
\end{definition}

Let us recall the following useful relations between delta and nabla derivatives
of $f$ at point $t$ (if they exist) and the derivative
from the right of the corresponding function $\overline{f}$.

\begin{proposition}[\cite{G:M:W:09}]
\label{prop:Dupf+deriv}
Let $t\in\T^{\kappa}_{\kappa}$ and $f:\T\rightarrow\R\cup\{\pm\infty\}$.
(a) If  $f^{\Delta}(t)$ exists, then
\[D\overline{f}(t)(u)=uf^{\Delta}(t)\ \ \text{for }\ u\geq 0\,.\]

b)  If  $f^{\nabla}(t)$ exists, then
\[D\overline{f}(t)(u)=uf^{\nabla}(t)\ \ \text{for }\ u\leq0\,.\]
\end{proposition}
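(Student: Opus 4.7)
My plan is to focus on part (a); part (b) follows by a dual argument using left-scattered points and the nabla version of the formula for $\overline{f}$ recorded in Remark~\ref{rem:epi}. The case $u = 0$ is immediate, so I assume $u > 0$.

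I would first dispose of the case when $t$ is right-scattered. Then for every $0 < h \leq \mu(t)/u$ the point $t+hu$ lies in $[t,\sigma(t)]$, and the defining formula for $\overline{f}$ gives
\[
\overline{f}(t+hu) = f(t) + \tfrac{f(\sigma(t))-f(t)}{\mu(t)}\cdot hu.
\]
Since at a right-scattered point $f^\Delta(t) = \tfrac{f(\sigma(t))-f(t)}{\mu(t)}$, the difference quotient equals $u f^\Delta(t)$ identically, and the limit follows.

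The substantive case is $t$ right-dense, where $\sigma(t) = t$. Given $\varepsilon > 0$, existence of $f^\Delta(t)$ provides $\delta > 0$ such that $|f(r) - f(t) - f^\Delta(t)(r-t)| \leq \varepsilon |r-t|$ for every $r \in \T$ with $|r-t| < \delta$. Because $t$ is right-dense I can choose $r_0 \in \T$ with $t < r_0 < t+\delta$; then for $h$ small enough that $t + hu < r_0$, either (i) $t+hu \in \T$, and the estimate with $r = t+hu$ applies directly after dividing by $h$, or (ii) $t+hu \in (s,\sigma(s))$ for some right-scattered $s \in \T$ with $t < s < t+hu < \sigma(s) \leq r_0$, so that both $s$ and $\sigma(s)$ lie in $\T \cap (t-\delta,t+\delta)$.

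In subcase (ii) I would write $t+hu = (1-\lambda)s + \lambda\sigma(s)$ with $\lambda := (t+hu-s)/\mu(s) \in (0,1)$, so that $\overline{f}(t+hu) = (1-\lambda) f(s) + \lambda f(\sigma(s))$. The algebraic identity $(1-\lambda)(s-t) + \lambda(\sigma(s)-t) = hu$ lets me rewrite
\begin{align*}
\overline{f}(t+hu) &- \overline{f}(t) - f^\Delta(t)\cdot hu \\
&= (1-\lambda)\bigl[f(s) - f(t) - f^\Delta(t)(s-t)\bigr] \\
&\quad + \lambda\bigl[f(\sigma(s)) - f(t) - f^\Delta(t)(\sigma(s)-t)\bigr],
\end{align*}
and the $\delta$-estimate applied at $r=s$ and $r=\sigma(s)$ bounds its absolute value by $\varepsilon\bigl[(1-\lambda)(s-t) + \lambda(\sigma(s)-t)\bigr] = \varepsilon\cdot hu$. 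Dividing by $h$ and letting $\varepsilon \to 0^+$ yields $D\overline{f}(t)(u) = u f^\Delta(t)$. The principal obstacle is precisely this subcase: a priori the gap width $\mu(s)$ is not controlled by $h$, so one cannot simply reduce to a one-sided limit along $\T$; it is the convex-combination identity that allows the two endpoint errors to combine cleanly into $\varepsilon\cdot hu$ without any such comparison.
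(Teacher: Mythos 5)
Your proposal is correct. Note that the paper itself offers no proof of this proposition: it is quoted from the external reference \cite{G:M:W:09}, so there is no in-paper argument to compare against, and what you supply is a self-contained verification. Your case analysis is sound: at a right-scattered $t$ the piecewise-linear definition of $\overline{f}$ makes the difference quotient identically $u\,\tfrac{f(\sigma(t))-f(t)}{\mu(t)}=u f^{\Delta}(t)$ for small $h$; at a right-dense $t$ the $\varepsilon$--$\delta$ characterization of $f^{\Delta}(t)$ (which, since $\sigma(t)=t$, reads exactly as the estimate you use) handles both the situation $t+hu\in\T$ and the situation where $t+hu$ falls in a gap $(s,\sigma(s))$ with $t<s<t+hu<\sigma(s)\leq r_0$. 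The convex-combination identity $(1-\lambda)(s-t)+\lambda(\sigma(s)-t)=hu$ is indeed the key point, since it lets the two endpoint errors at $r=s$ and $r=\sigma(s)$ recombine into the single bound $\varepsilon\, hu$ without any control of $\mu(s)$ in terms of $h$; your observation that right-density of $t$ forces $s>t$ (so both endpoints lie in $(t,t+\delta)$) closes the only delicate point. The trivial case $u=0$ and the dual argument for part (b), using the left-scattered branch of Remark~\ref{rem:epi} and the nabla derivative, are acceptable as stated.
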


\begin{remark}
\label{eq:6}
When we fix $u=1$ ($u=-1$) then immediately
from Proposition \ref{prop:Dupf+deriv} one gets
\begin{enumerate}
  \item  $D\overline{f}(t)(1)=f^{\Delta}(t)=\overline{f}'_+(t)$,
  \item  $D\overline{f}(t)(-1)=-f^{\nabla}(t)=-\overline{f}'_-(t)$,
\end{enumerate}
where  $\overline{f}'_+(t)$ and $\overline{f}'_-(t)$ denote
left and right hand side derivatives of $\overline{f}$ in the classical sense.
\end{remark}

We introduce the following notations and definitions.

\begin{definition}\label{def:uni}
Let $u\in\R$ be any real number. Then,
\begin{equation*}
d_ut:=\\
\left\{
  \begin{array}{ll}
    u\Delta t, \text{ if $u\geq 0$} \\
    u\nabla t, \text{ if $u\leq 0$}
  \end{array}
\right.
\end{equation*}
\begin{equation*}
 y\circ \xi_u:=\\
  \left\{
  \begin{array}{ll}
    u(y\circ \sigma), \text{ if $u\geq 0$} \\
    u(y\circ\rho), \text{ if $u\leq 0$}.
  \end{array}
\right.
\end{equation*}
\end{definition}

\begin{remark}
With the notation of Definition~\ref{def:uni} we have
\begin{equation*}
\int_a^b \overline{f}(t)d_ut:=\\
\left\{
  \begin{array}{ll}
    u\int_a^b f(t)\Delta t, \text{ if $u\geq 0$} \\
    u\int_a^b f(t)\nabla t, \text{ if $u\leq 0$}
  \end{array}
\right.
\end{equation*}
where $\overline{f}$ is defined by formula~\eqref{eq:f bar}.
\end{remark}

Let us consider the following problem.
Given $u\in \R \setminus \{0\}$, find $y$ that is a solution to
\begin{equation}
\label{problem:Pu}
\begin{gathered}
\min\ \mathcal{L}(y) = \int_a^b L(t, (y\circ \xi_u)(t),D\overline{y}(t)(u))d_ut \, ,\\
y(a)=\alpha,\quad y(b)=\beta \, ,
\end{gathered}
\end{equation}
in the class of functions $y \in C_{\diamond}^{1}(I,\R)$.

\begin{remark}
We are excluding the case $u = 0$ for which problem
\eqref{problem:Pu} is trivial (for $u = 0$ there is
nothing to minimize).
\end{remark}

\begin{remark}
Proposition~\ref{prop:Dupf+deriv} implies the following:
if $y$ is $\Delta$-differentiable, then for $u=1$ \eqref{problem:Pu}
is just a problem of the calculus of variations
with $\Delta$ derivative (see \cite{B:T:08,B:04}),
while if $f$ is $\nabla$-differentiable,
then for $u=-1$ \eqref{problem:Pu} reduces to a problem
of the calculus of variations with
$\nabla$ derivative (see \cite{A:B:L:06,NM:T}).
\end{remark}

\begin{definition}
\label{def:minimizer:uni}
We say that $\hat{y}\in C_{\diamond}^{1}(I,\mathbb{R})$
is \emph{a weak local minimizer} (respectively \emph{weak local
maximizer}) for problem \eqref{problem:Pu} if there
exists $\delta
>0$ such that
$$
\mathcal{L}(\hat{y})\leq \mathcal{L}(y) \quad (\text{respectively} \
\   \mathcal{L}(\hat{y}) \geq \mathcal{L}(y))
$$
for all $y \in C_{\diamond}^{1}(I, \mathbb{R})$ satisfying
$||y-\hat{y}||_{1,\infty}<\delta$.
\end{definition}

\begin{theorem}{\rm (The directional Euler-Lagrange equation
on time scales).}
\label{thm:uni EL}
If $y \in C_{\diamond}^{1}(I,\R)$ is a weak local minimizer to problem
\eqref{problem:Pu}, then $y$ satisfies
the following equation:
\begin{multline}
\label{uni EL}
D\left(\partial_3 L(t, (y\circ \xi_u)(t),D\overline{y}(t)(u))\right)(u)\\
=u\cdot\partial_2 L(t, (y\circ \xi_u)(t),D\overline{y}(t)(u)),
\ \forall t\in I^{\kappa^2}_{\kappa^2}\, ,
\end{multline}
where $\overline{y}$ is defined by formula~\eqref{eq:f bar}.
\end{theorem}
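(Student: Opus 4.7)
The plan is to mimic the proof of Theorem~\ref{thm:mr1}: perform a standard variation, apply a single integration-by-parts to eliminate the $(\eta\circ\xi_u)$-term, invoke the appropriate Dubois-Reymond lemma, and finally translate the resulting time-scale identity back into directional-derivative notation via Proposition~\ref{prop:Dupf+deriv}. Concretely, I fix an admissible variation $\eta\in C_{\diamond}^{1}(I,\R)$ with $\eta(a)=\eta(b)=0$, set $\phi(\varepsilon)=\mathcal{L}(\hat{y}+\varepsilon\eta)$, and impose $\phi'(0)=0$. Using the additivity $\overline{f+g}=\overline{f}+\overline{g}$ (noted after \eqref{eq:f bar}), together with the linearity of $y\mapsto y\circ\xi_u$ and of $D\overline{(\cdot)}(t)(u)$, differentiation under the integral sign yields the first-order condition
\[
0=\int_a^b\bigl[\partial_2 L\cdot(\eta\circ\xi_u)(t)+\partial_3 L\cdot D\overline{\eta}(t)(u)\bigr]\,d_ut,
\]
with the partial derivatives evaluated at $(t,(\hat{y}\circ\xi_u)(t),D\overline{\hat{y}}(t)(u))$.

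I would then split according to the sign of $u$ (the case $u=0$ being excluded by hypothesis). For $u>0$, Definition~\ref{def:uni} and Proposition~\ref{prop:Dupf+deriv}(a) give $(\eta\circ\xi_u)=u\eta^\sigma$, $D\overline{\eta}(t)(u)=u\eta^\Delta(t)$, and $d_ut=u\Delta t$; the nonzero factor $u^2$ can be divided out, and the condition reduces to $\int_a^b[\partial_2 L\cdot\eta^\sigma+\partial_3 L\cdot\eta^\Delta]\Delta t=0$. Setting $A(t)=\int_a^t\partial_2 L\,\Delta\tau$ and applying the first integration-by-parts formula in \eqref{intBP} exactly as in the proof of Theorem~\ref{thm:mr1} gives $\int_a^b[\partial_3 L-A(t)]\eta^\Delta(t)\Delta t=0$, so Lemma~\ref{DBRL:d} forces $\partial_3 L(t)-A(t)\equiv c$ on $I^\kappa$. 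Delta-differentiating produces $(\partial_3 L)^\Delta(t)=\partial_2 L(t)$; multiplying by $u$ and invoking Proposition~\ref{prop:Dupf+deriv}(a) once more to rewrite $u(\partial_3 L)^\Delta(t)$ as $D\overline{\partial_3 L}(t)(u)$ yields exactly \eqref{uni EL}.

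For $u<0$ the argument is the mirror image: the same $u^2$ factor pulls out (now through Proposition~\ref{prop:Dupf+deriv}(b) and the nabla half of Definition~\ref{def:uni}), the third integration-by-parts formula in \eqref{intBP} together with Lemma~\ref{DBRL:n} takes the place of their delta counterparts, and Proposition~\ref{prop:Dupf+deriv}(b) converts $u(\partial_3 L)^\nabla(t)$ into $D\overline{\partial_3 L}(t)(u)$, again producing \eqref{uni EL}. The main obstacle I expect is bookkeeping rather than conceptual insight: one must verify that the linearity of the bar-construction $\overline{(\cdot)}$ combines correctly on both time-scale points and the connecting intervals $(s,\sigma(s))$ described in Remark~\ref{rem:epi}, justify differentiation under the integral sign from the smoothness of $L$ and the bound $\|y-\hat{y}\|_{1,\infty}<\delta$, and track the correct domain of the final identity: because a second $\Delta$- or $\nabla$-derivative of $\partial_3 L$ is needed to pass from the constancy statement to \eqref{uni EL}, the equation is only guaranteed on $I^{\kappa^2}_{\kappa^2}$, as stated.
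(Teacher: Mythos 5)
Your proposal is correct and follows essentially the same route as the paper: split by the sign of $u$, use Definition~\ref{def:uni} and Proposition~\ref{prop:Dupf+deriv} to reduce \eqref{problem:Pu} to a pure delta (resp.\ nabla) variational problem with the $u^2$ factor cancelling, obtain the delta (resp.\ nabla) Euler--Lagrange equation, and translate back via Proposition~\ref{prop:Dupf+deriv}. The only difference is presentational: the paper simply invokes Theorem~\ref{thm:mr1} (with $\gamma_1=1,\gamma_2=0$ or $\gamma_1=0,\gamma_2=1$) for the reduced Lagrangian, whereas you inline that theorem's proof (variation, integration by parts, Dubois--Reymond), which amounts to the same argument.
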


\begin{proof}
We consider two cases: $u>0$ and $u<0$. For $u>0$ problem~\eqref{problem:Pu}
reduces to
\begin{equation}
\label{eq:uni:delta}
\begin{gathered}
\min\ \int_a^b uL(t, u(y\circ \sigma)(t),uy^{\Delta}(t))\Delta t,\\
y(a)=\alpha,\quad y(b)=\beta\, .
\end{gathered}
\end{equation}
If we set $f(t,y^{\sigma}(t),y^{\Delta}(t)):=uL(t, u(y\circ \sigma)(t),uy^{\Delta}(t))$
then problem~\eqref{eq:uni:delta} is equivalent to
\begin{equation}
\label{eq:int}
\min\ \int_a^b f(t,y^{\sigma}(t),y^{\Delta}(t))\Delta t\, , \ \ y(a)=\alpha,\ y(b)=\beta.
\end{equation}
For problem~\eqref{eq:int} the Euler-Lagrange equation \eqref{eq:EL2}
with $\Ga = 1$ and $\Gb = 0$ gives the delta equation
\begin{equation*}
\left[\partial_3f(t,y^{\sigma}(t),y^{\Delta}(t))\right]^{\Delta}
=\partial_2f(t,y^{\sigma}(t),y^{\Delta}(t)),
\end{equation*}
which is equivalent to
\begin{equation*}
\left[\partial_3L(t, u(y\circ \sigma)(t),uy^{\Delta}(t))\right]^{\Delta}
=\partial_2L(t, u(y\circ \sigma)(t),uy^{\Delta}(t))
\end{equation*}
for every $t\in I^{\kappa^2}$, \textrm{i.e.}, we obtain \eqref{uni EL} for $u>0$.\\
Similarly, let us take $u<0$. Then problem~\eqref{problem:Pu}
reduces to the following nabla problem of the calculus of variations:
\begin{equation}\label{eq:uni:nabla}
\min\ \int_a^b uL(t, u(y\circ \rho)(t),uy^{\nabla}(t))\nabla t,\ \ \ y(a)=\alpha,\ y(b)=\beta.
\end{equation}
If we set $g(t,y^{\sigma}(t),y^{\nabla}(t)):=uL(t, u(y\circ \rho)(t),uy^{\nabla}(t))$
then problem~\eqref{eq:uni:nabla} is equivalent to
\begin{equation*}
\min\ \int_a^b g(t,y^{\rho}(t),y^{\nabla}(t))\nabla t,\ \ \ y(a)=\alpha,\ y(b)=\beta.
\end{equation*}
From the Euler-Lagrange equation \eqref{eq:EL1}
with $\Ga = 0$ and $\Gb = 1$ we get the nabla differential equation
\begin{equation*}
\left[\partial_3g(t,y^{\rho}(t),y^{\nabla}(t))\right]^{\nabla}
=\partial_2g(t,y^{\nabla}(t),y^{\nabla}(t))
\end{equation*}
that one can write equivalently as
\begin{equation*}
\left[\partial_3L(t, u(y\circ \rho)(t),uy^{\nabla}(t))\right]^{\nabla}
=\partial_2L(t, u(y\circ \rho)(t),uy^{\nabla}(t))
\end{equation*}
for every $t\in I_{\kappa^2}$, \textrm{i.e.}, we obtain \eqref{uni EL} for $u<0$.
\end{proof}


\section{CONCLUSION}
\label{sec:conc}

We introduce general problems of the calculus of variations
on time scales that unify the delta and the nabla problems
previously studied in the literature.
The proposed calculus of variations
extends the problems with delta derivatives
considered in \cite{B:04} and analogous nabla
problems \cite{A:B:L:06} to more general cases
described by the composition of delta and/or nabla
integrals or, even more generally, to the composition
of variational integrals with directional derivatives:
\begin{multline*}
\g(y) = H\left(\int_a^b L_1(t, (y\circ \xi_{u_1})(t),D\overline{y}(t)(u_1))d_{u_1}t,\right.\\
\left.\ldots, \int_a^b L_m(t, (y\circ \xi_{u_m})(t),D\overline{y}(t)(u_m))d_{u_m}t\right),
\end{multline*}
where $H : \R^m\rightarrow \R$ and $u = \left(u_1,\ldots,u_m\right) \in \R^m$.
We prove Euler-Lagrange type conditions
for the generalized calculus of variations
as well as sufficient conditions under proper
convexity assumptions. We claim that the notion of directional
derivative plays an important role
in the calculus of variations on time scales.
More than that, we hope the notion of directional derivative
will become a standard tool in the theory of time scales.
It would be interesting to generalize our results
to variational problems involving higher-order
directional derivatives, unifying and extending
the higher-order results on time scales
of \cite{F:T:08} and \cite{NM:T}.
This is a question needing further developments.



\end{document}